\documentclass[12pt]{amsart}


\usepackage[margin=2.75cm]{geometry}


\usepackage{amsmath}
\usepackage{amssymb}
\usepackage{amsthm}
\usepackage{epsfig}
\usepackage{url}
\usepackage{array}
\usepackage{varwidth}
\usepackage{tikz}
\usetikzlibrary{patterns}
\usepackage{multirow}
\usepackage{textcomp}
\usepackage{bbm}
\usepackage{float}
\usepackage{setspace}
\usepackage{array}
\usepackage{color}
\usepackage{ifthen}
\usepackage{enumerate}



\newtheorem{theorem}{Theorem}
\newtheorem{proposition}[theorem]{Proposition}

\newtheorem{corollary}[theorem]{Corollary}

\newtheorem{lemma}[theorem]{Lemma}

\newtheorem*{mattreethm}{Matrix Tree Theorem}

\theoremstyle{definition}

\theoremstyle{remark}

\newtheorem{remark}[theorem]{Remark}

\numberwithin{equation}{section}


\newcommand{\vu}{\mathbf{u}}
\newcommand{\vv}{\mathbf{v}}

\newcommand{\RR}{\mathbb{R}}

\newcommand{\one}{\mathbf{1}}



\begin{document}

\title[]{Eigenvalues of graph Laplacians via rank-one perturbations}

\author{Steven Klee}
\address{Seattle University, Department of Mathematics, 901 12th Avenue, Seattle, WA 98122, USA}
\email{klees@seattleu.edu}
\thanks{Steven Klee's research was supported by NSF grant DMS-1600048.}
\author{Matthew T. Stamps}
\address{Yale-NUS College, Division of Science, 12 College Avenue West, Singapore 138610}
\email{matt.stamps@yale-nus.edu.sg}



\begin{abstract}
We show how the spectrum of a graph Laplacian changes with respect to a certain type of rank-one perturbation.  We apply our finding to give new short proofs of the spectral version of Kirchhoff's Matrix Tree Theorem and known derivations for the characteristic polynomials of the Laplacians for several well known families of graphs, including complete, complete multipartite, and threshold graphs.
\end{abstract}

\maketitle

%

\section{Introduction}

In this paper, we study finite simple graphs, i.e., graphs with finite vertex sets that do not contain loops or multiple edges.  We use $V(G)$ and $E(G)$ to denote the vertex set and edge set of a graph $G$, respectively, and we take $V(G) = [n] := \{1,\ldots,n\}$ unless stated otherwise.  Recall that a \textit{spanning tree} in a graph $G$ is a subgraph $T \subseteq G$ such that 
\begin{enumerate}
\item $V(T) = V(G)$, 
\item $T$ is connected, and 
\item $T$ does not contain any cycles.
\end{enumerate}
We are interested in the number $\tau(G)$ of spanning trees in $G$. 

Computing $\tau(G)$ for an arbitrary graph $G$ on $n$ vertices can be done reasonably efficiently\footnote{We will see that the number of spanning trees can be computed as the determinant of an $(n-1) \times (n-1)$ matrix, which can be done by na\"ive row reduction in $O(n^3)$ time.} with the help of its \textit{Laplacian matrix}, $L(G)$, which is the $n \times n$ matrix with entries 
$$
L(G)(i,j) = 
\begin{cases}
\deg(i) & \text{ if } i = j, \\
-1 & \text{ if } i \neq j \text{ and } \{i,j\} \in E(G), \\
0 & \text{otherwise.}
\end{cases}
$$
Note that the rows (and columns) of $L(G)$ sum to zero, meaning the all ones vector in $\RR^n$, which we denote by $\one_n$, always lies in the nullspace of $L(G)$. Thus, we must account for the nullity of $L(G)$ when attempting to make any connection between $G$ and its Laplacian. For any $1 \leq i,j \leq n$, not necessarily distinct, let $L(G)_{i,j}$ be the matrix obtained by eliminating the $i$th row and $j$th column from $L(G)$.  A celebrated result of Kirchhoff \cite{Kirchhoff} reveals a fundamental connection between the Laplacian matrix and the number of spanning trees in a graph.

\begin{mattreethm}[\cite{Kirchhoff}]
Let $G$ be a graph on $n$ vertices.  
\begin{enumerate}[(i)]
\item For any vertices $i$ and $j$, not necessarily distinct, 
$$
\tau(G) = (-1)^{i+j} \det(L(G)_{i,j}).
$$
\item If $\lambda_1,\ldots,\lambda_n$ are the eigenvalues of $L(G)$ with $\lambda_n = 0$, then 
$$
\tau(G) = \cfrac{\lambda_1 \cdot \ldots \cdot \lambda_{n-1}}{n}.
$$
\end{enumerate}
\end{mattreethm}

The Matrix Tree Theorem is beautiful in its simplicity, but the requirement that one must choose a row and column to eliminate in part (i) is somewhat unsatisfying since some choices may be more convenient than others.  We demonstrated a technique for easily counting spanning trees in many well-studied families of graphs by adding rank-one matrices to their Laplacian matrices in \cite{Klee-Stamps-unweighted}. 

\begin{lemma}[\cite{Klee-Stamps-unweighted}]
\label{KS-rank-one-result}
Let $G$ be a graph on $n$ vertices with Laplacian matrix $L$, and let $\vu = (u_i)_{i \in [n]}$ and $\vv = (v_i)_{i \in [n]}$ be column vectors in $\RR^n$.  Then 
\begin{equation}\label{eqn:KS-unweighted}
\det(L + \vu\vv^T) = \left( \sum_{i = 1}^n u_i \right) \left( \sum_{i = 1}^n v_i\right) \tau(G).
\end{equation}
\end{lemma}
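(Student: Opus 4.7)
The plan is to apply the Matrix Determinant Lemma in its general form,
\[
\det(A + \vu\vv^T) = \det(A) + \vv^T \adj(A)\vu,
\]
valid for any square matrix $A$, with $A = L := L(G)$. Since the rows of $L$ sum to zero we have $L\one_n = \zero$, so $\det(L) = 0$ and the identity will collapse to $\det(L + \vu\vv^T) = \vv^T \adj(L)\vu$.

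The key step will be to evaluate $\adj(L)$, and the claim is that $\adj(L) = \tau(G)\,\one_n\one_n^T$. By definition, the $(i,j)$ entry of $\adj(L)$ is the $(j,i)$-cofactor $(-1)^{i+j}\det(L(G)_{j,i})$; by part (i) of the Matrix Tree Theorem this equals $\tau(G)$ for every pair $(i,j)$, uniformly. Substituting back will give
\[
\vv^T\adj(L)\vu = \tau(G)(\vv^T\one_n)(\one_n^T\vu) = \tau(G)\Bigl(\sum_{i=1}^n v_i\Bigr)\Bigl(\sum_{i=1}^n u_i\Bigr),
\]
which is precisely \eqref{eqn:KS-unweighted}.

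The one point that requires care is the use of the Matrix Determinant Lemma when $L$ is singular, since the more familiar form $\det(A + \vu\vv^T) = \det(A)(1 + \vv^T A^{-1}\vu)$ assumes invertibility. I would handle this either by applying the invertible version to $L + tI$ for generic $t$, using $\det(L + tI)(L + tI)^{-1} = \adj(L + tI)$, and then taking $t \to 0$ by continuity of both sides, or by the slicker observation that both sides of the adjugate form are polynomials in the entries of $A$ and therefore must agree identically once they agree on the Zariski-dense set of invertible matrices. Apart from that subtlety, the proof is essentially a one-line consequence of the determinant lemma together with the uniformity of the cofactors of $L$.
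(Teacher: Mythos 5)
Your argument is mathematically correct, but note first that this paper does not actually prove Lemma~\ref{KS-rank-one-result}: it is imported wholesale from \cite{Klee-Stamps-unweighted}, so the comparison is really with the proof given there. That proof runs in the opposite logical direction from yours. In \cite{Klee-Stamps-unweighted}, one writes $L = NN^T$ for the signed incidence matrix $N$ and applies the Cauchy--Binet formula to the bordered product $[N \mid \vu][N \mid \vv]^T = L + \vu\vv^T$; the surviving $n\times n$ minors are exactly those whose column sets consist of the appended column together with the edges of a spanning tree, and the identity falls out with $\tau(G)$ appearing as a count of such subsets. In particular, part~(i) of the Matrix Tree Theorem is obtained there as a \emph{corollary} of the lemma (take $\vu, \vv$ to be standard basis vectors), not used as an input. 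You instead take part~(i) as given, observe that every cofactor of $L$ equals $\tau(G)$ so that $\adj(L) = \tau(G)\,\one_n\one_n^T$, and conclude via the adjugate form of the Matrix Determinant Lemma together with $\det(L) = 0$; your handling of the singularity of $L$ (perturb to $L + tI$, or invoke polynomiality/Zariski density) is also fine. The trade-off is clear: your route is a genuine one-liner if one already has the cofactor formula, and it nicely packages the classical fact that all cofactors of a Laplacian agree; but it forfeits the self-contained character that makes the lemma useful as a \emph{route to} the Matrix Tree Theorem, which is precisely how the present paper deploys it (the Corollary in Section~1 derives MTT~(ii) from this lemma plus Theorem~\ref{thm:main}). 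There is no circularity --- parts (i) and (ii) are independent statements --- but if the goal were a proof of the whole Matrix Tree Theorem from scratch, your version would leave part~(i) unproven.
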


This approach not only allows one to work with the full Laplacian matrix, but it transfers the choice of which row and column to eliminate when computing the determinant to a choice of which rank-one matrix to add.  In many cases, this leads to simpler computations --- for example, if $K_n$ is the complete graph on $n$ vertices and $\vu = \vv = \one_n$, then $L(K_n) + \vu\vv^T = nI_n$, whose determinant is clearly $n^n$.  Cayley's  formula \cite{Cayley}, $\tau(K_n) = n^{n-2}$, follows immediately from Equation~\eqref{eqn:KS-unweighted}.

A limitation of Lemma \ref{KS-rank-one-result}, however, is that it only gives spanning tree counts; it does not, for instance, allow one to glean stronger information about the Laplacian eigenvalues.  The purpose of this paper is to present an analogous result to Lemma~\ref{KS-rank-one-result} (see Theorem~\ref{thm:main}) for Laplacian eigenvalues and to demonstrate its applicability to several well known families of graphs.

Typically, the proof of part (ii) of the Matrix Tree Theorem requires some analysis relating the characteristic polynomial of $L(G)_{i,j}$ to that of $L(G)$. Instead, we present a direct proof as a consequence of our main theorem, which shows how the characteristic polynomial of a graph Laplacian matrix changes when we add a certain type of rank-one matrix.

\begin{theorem} \label{thm:main}
Let $G$ be a graph on $n$ vertices with Laplacian matrix $L = L(G)$, and let $\lambda_1,\ldots,\lambda_n$ be the eigenvalues of $L$ with $\lambda_n = 0$.  

\begin{enumerate}[(i)]
    \item There exist orthogonal eigenvectors $\vv_1,\ldots,\vv_n$ with $L\vv_i = \lambda_i \vv_i$ for all $i \in [n]$ and $\vv_n = \one_n$. 
    \item Let $\vu = (u_i)_{i \in [n]}$ be an arbitrary column vector in $\RR^n$. The characteristic polynomial of the matrix $\overline{L} := L + \vu\one_n^T$ is 
\begin{align}
\label{eqn:u-update} 
\det(\overline{L} - \lambda I_n ) &= (\lambda_1 - \lambda)\cdots(\lambda_{n-1}-\lambda)\left(\sum_{i=1}^n u_i - \lambda\right). 
\end{align}
\end{enumerate}
\end{theorem}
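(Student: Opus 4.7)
The plan is to reduce the computation to a single change-of-basis using the spectral theorem. Part (i) is immediate from the fact that $L(G)$ is a real symmetric matrix: by the spectral theorem it admits an orthogonal basis of eigenvectors, and since the rows of $L$ sum to zero, $\one_n$ spans an eigenspace for the eigenvalue $0 = \lambda_n$. Choosing $\vv_n = \one_n$ and applying Gram--Schmidt within each other eigenspace produces the required orthogonal basis $\vv_1, \ldots, \vv_n$.

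For part (ii), I would let $V$ be the orthogonal matrix whose $i$-th column is $\vv_i / \|\vv_i\|$, so that $V^T L V = \Lambda := \diag(\lambda_1, \ldots, \lambda_n)$. Then
\begin{equation*}
V^T(\overline{L} - \lambda I_n)V \;=\; \Lambda - \lambda I_n + (V^T \vu)(\one_n^T V).
\end{equation*}
The key observation is that since $\vv_n = \one_n$ and the other $\vv_i$ are orthogonal to it, the row vector $\one_n^T V$ has entries $\one_n^T (\vv_i/\|\vv_i\|)$, which vanish for $i < n$ and equal $\sqrt{n}$ for $i = n$. Therefore $(V^T \vu)(\one_n^T V)$ is a matrix whose only nonzero entries lie in the $n$-th column. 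Adding this to the diagonal matrix $\Lambda - \lambda I_n$ yields an upper triangular matrix.

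Since orthogonal similarity preserves determinants, $\det(\overline{L} - \lambda I_n)$ equals the product of the diagonal entries of this upper triangular matrix. The first $n-1$ diagonal entries are unchanged, contributing $(\lambda_1 - \lambda)\cdots(\lambda_{n-1} - \lambda)$. The final diagonal entry is
\begin{equation*}
\lambda_n - \lambda + \sqrt{n}\cdot\frac{1}{\sqrt{n}}\,\one_n^T \vu \;=\; -\lambda + \sum_{i=1}^n u_i,
\end{equation*}
using $\lambda_n = 0$ and $\vv_n / \|\vv_n\| = \one_n / \sqrt{n}$. Multiplying these factors gives exactly Equation~\eqref{eqn:u-update}.

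There is no real obstacle; the proof is essentially mechanical once one picks the right basis. The only subtlety is the bookkeeping around the normalization $\sqrt{n}$, which must cancel cleanly between the two factors $(V^T \vu)_n$ and $(\one_n^T V)_n$ to produce the clean expression $\sum u_i - \lambda$.
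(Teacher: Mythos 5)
Your proof is correct, and for part (ii) it takes a genuinely different route from the paper's. The paper verifies that $\vv_1,\ldots,\vv_{n-1}$ remain (right) eigenvectors of $\overline{L}$ with unchanged eigenvalues and that $\one_n$ is a \emph{left} eigenvector with eigenvalue $\sum_i u_i$; because these $n$ values need not be distinct, it must then treat $u_1,\ldots,u_n$ as indeterminates and invoke a genericity/polynomial-identity argument to pin down the characteristic polynomial with multiplicity. Your conjugation by the orthonormal eigenbasis sidesteps that entire step: since $\one_n^T V = (0,\ldots,0,\sqrt{n})$, the conjugated perturbation $(V^T\vu)(\one_n^T V)$ is supported on the last column, so $V^T(\overline{L}-\lambda I_n)V$ is explicitly upper triangular and the characteristic polynomial---multiplicities included---is read off as the product of the diagonal entries, with no need to rule out the degenerate case $\sum_i u_i = \lambda_j$. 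What each approach buys: yours is more self-contained and mechanical, handling multiplicity for free; the paper's isolates the conceptual picture (the perturbation fixes the hyperplane $\one_n^{\perp}$ pointwise up to eigenvalue and moves only the remaining eigenvalue to $\sum u_i$) at the cost of the Zariski-density argument. One small imprecision in your part (i): $\one_n$ lies in, but need not span, the $0$-eigenspace (the nullity of $L$ equals the number of connected components of $G$), so when $G$ is disconnected your Gram--Schmidt step must also be run inside the $0$-eigenspace starting from $\one_n$; this is clearly what you intend and does not affect the argument.
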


We prove Theorem \ref{thm:main} in Section~\ref{section:mainproof}.  As an immediate consequence, however, we obtain a short proof of part (ii) of the Matrix Tree Theorem.

\begin{corollary}
Let $G$ be a graph on $n$ vertices.  If $\lambda_1,\ldots,\lambda_n$ are the eigenvalues of $L(G)$ with $\lambda_n = 0$, then 
$$
\tau(G) = \cfrac{\lambda_1 \cdot \ldots \cdot \lambda_{n-1}}{n}.
$$
\end{corollary}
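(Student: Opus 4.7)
The plan is to deduce this immediately by equating two different evaluations of $\det(L + \one_n\one_n^T)$, one coming from Theorem~\ref{thm:main} and one from Lemma~\ref{KS-rank-one-result}. Both results can be applied to this matrix because Theorem~\ref{thm:main} permits any vector $\vu$ in the rank-one update $\vu\one_n^T$, and Lemma~\ref{KS-rank-one-result} permits any pair $\vu,\vv$.

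First I would specialize Theorem~\ref{thm:main}(ii) by taking $\vu = \one_n$ and setting $\lambda = 0$. Since $\sum_{i=1}^n u_i = n$, the right-hand side of \eqref{eqn:u-update} becomes $\lambda_1 \cdots \lambda_{n-1} \cdot n$, so
\[
\det\bigl(L + \one_n \one_n^T\bigr) \;=\; n\,\lambda_1 \lambda_2 \cdots \lambda_{n-1}.
\]
Next I would specialize Lemma~\ref{KS-rank-one-result} by taking $\vu = \vv = \one_n$, which gives
\[
\det\bigl(L + \one_n \one_n^T\bigr) \;=\; n \cdot n \cdot \tau(G) \;=\; n^2\, \tau(G).
\]
Equating these two expressions and dividing by $n$ yields $\tau(G) = \lambda_1 \cdots \lambda_{n-1}/n$, as desired.

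There is essentially no obstacle in this argument; the work is entirely contained in Theorem~\ref{thm:main} and Lemma~\ref{KS-rank-one-result}. The only thing worth remarking on is that choosing $\vu = \one_n$ (rather than some other vector) ensures both hypotheses are simultaneously satisfied and makes the scalar factors compatible, and that the inclusion of the eigenvalue $\lambda_n = 0$ on the right-hand side of \eqref{eqn:u-update} is precisely what is "replaced" by the coefficient $\sum u_i = n$ when $\lambda = 0$, which is the mechanism that lets the spectral count recover the tree count.
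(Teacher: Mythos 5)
Your proposal is correct and follows exactly the paper's own argument: both evaluate $\det(L + \one_n\one_n^T)$ once via Theorem~\ref{thm:main} with $\vu = \one_n$ (giving $n\,\lambda_1\cdots\lambda_{n-1}$) and once via Lemma~\ref{KS-rank-one-result} with $\vu = \vv = \one_n$ (giving $n^2\tau(G)$), then equate. Setting $\lambda = 0$ in the characteristic polynomial is just the paper's ``product of eigenvalues equals the determinant'' step phrased differently.
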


\begin{proof}
By taking $\vu = \one_n$ in Theorem \ref{thm:main}, it follows that the eigenvalues of $L + \one_n\one_n^T$ are $\lambda_1,\ldots,\lambda_{n-1}, n$.   Thus, by Equation~\eqref{eqn:KS-unweighted}, 
$$
n^2 \tau(G) = \det(L + \one_n\one_n^T) = \lambda_1 \cdot \ldots \cdot \lambda_{n-1} \cdot n.
$$
\end{proof}

The rest of the paper is structured as follows: In Section~\ref{section:mainproof}, we prove our main result, Theorem~\ref{thm:main}.  In Section~\ref{section:applications}, we demonstrate how Theorem~\ref{thm:main} can be applied to derive the characteristic polynomials for Laplacians of several well known families of graphs, including complete graphs, complete multipartite graphs, complete bipartite graphs with perfect matchings removed, and threshold graphs.

\section{Proof of the Main Result} \label{section:mainproof}

\begin{proof}[Proof of Theorem \ref{thm:main}.]
For part (i), we note that since $L$ is a real symmetric matrix, the Spectral Theorem \cite{Halmos} implies $\RR^n$ has an orthogonal basis  of eigenvectors of $L$.  Moreover, because this orthogonal basis of eigenvectors can be constructed inductively beginning with an arbitrary basis of eigenvectors, we may assume $\vv_n = \one_n$ with corresponding eigenvalue $\lambda_n = 0$.

For part (ii), we begin with the observation that, for all $i<n$, 
$$
\left( L + \vu\one_n^T\right) \vv_i = L \vv_i + \vu \left( \one_n^T \vv_i\right) = \lambda_i \vv_i,
$$
since $\vv_i$ is orthogonal to $\vv_n = \one_n$.  This means $\vv_1,\ldots,\vv_{n-1}$ are also eigenvectors for $\overline{L}$ with corresponding eigenvalues $\lambda_1,\ldots,\lambda_{n-1}$. 

Next, we use the fact that $\overline{L}$ and its transpose have the same eigenvalues to see that
\begin{align*}
\left(L + \vu\one_n^T\right)^T\one_n &= \left(L^T + \one_n\vu^T\right) \one_n \\
&\stackrel{(*)}{=} L\one_n + \one_n \left( \vu^T \one_n \right) \\
&= \left(\sum_{i=1}^n u_i\right) \one_n.
\end{align*}
Note that $(*)$ follows from the fact that $L$ is symmetric.  Thus, $\sum u_i$ is an eigenvalue for $\overline{L}$ with corresponding (left) eigenvector $\one_n$.  It remains to ensure that, with multiplicity, $\sum u_i$ is not one of the eigenvalues counted among $\lambda_1,\ldots,\lambda_{n-1}$. 

To see why this is the case, we consider the entries of $\vu$ as indeterminates so that $\det(\lambda I_n - \overline{L})$ can be viewed as a polynomial in $\RR[\lambda,u_1,\ldots,u_n]$.  For a generic choice of $u_1,\ldots,u_n$, it must be the case that $\lambda_j \neq \sum u_i$ for any $1 \leq j \leq n-1$.  Thus, $\sum u_i$ is an eigenvalue of $\overline{L}$ that is different from any of its other eigenvalues for generic $u_1,\ldots,u_n$. This means Equation~\eqref{eqn:u-update} holds when $\vu$ is generic.  Since both sides of Equation~\eqref{eqn:u-update} are polynomials in $\RR[\lambda,u_1,\ldots,u_n]$ that agree almost everywhere, they must be equal for all $\lambda, u_1,\ldots,u_n$.
\end{proof}

\begin{remark}
We proved a generalization of Lemma~\ref{KS-rank-one-result} for weighted graphs in \cite{Klee-Stamps-weighted}, which raises the question about a weighted version of Theorem~\ref{thm:main}.  Since its proof only relies on graph Laplacians being real symmetric matrices with rows (and columns) summing to zero, one can state Theorem~\ref{thm:main} in terms of weighted Laplacians without any additional work.  We omit the weighted version here, however, since the requirement of $\vv = \one_n$ in the rank-one matrix $\vu \vv^T$ in Theorem~\ref{thm:main} limits our ability to extend the applications in Section~\ref{section:applications} to their weighted analogs in a straightforward manner. 
\end{remark}

\section{Applications}\label{section:applications}

In this section, we demonstrate several ways Theorem~\ref{thm:main} can be applied to derive the characteristic polynomials of the Laplacian matrices of certain graphs.  The results we present here are not new, but the technique given by Theorem~\ref{thm:main} affords us direct and more straightforward proofs.

\subsection{Complete Graphs}

Let $K_n$ denote the \emph{complete graph} on $n$ vertices.

\begin{proposition}
The characteristic polynomial of $L(K_n)$ is $$\det(L(K_n) - \lambda I_n) = -\lambda (n-\lambda)^{n-1}.$$
\end{proposition}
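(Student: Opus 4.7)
The plan is to apply Theorem~\ref{thm:main} with the vector $\vu = \one_n$, which is well chosen because it turns $\overline{L} = L(K_n) + \one_n \one_n^T$ into a scalar multiple of the identity. Concretely, observe that $L(K_n) = nI_n - J_n$, where $J_n = \one_n\one_n^T$ is the all-ones matrix: indeed, each diagonal entry of $L(K_n)$ equals $n-1$ and each off-diagonal entry equals $-1$, matching $nI_n - J_n$. Adding $\one_n \one_n^T = J_n$ cancels out $-J_n$, so $\overline{L} = nI_n$. The characteristic polynomial of $nI_n$ is immediately $(n - \lambda)^n$.

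Next, I would invoke Theorem~\ref{thm:main}(ii). Writing $\lambda_1,\dots,\lambda_{n-1}$ for the nonzero eigenvalues of $L(K_n)$ (and $\lambda_n = 0$), the theorem says
\[
(n - \lambda)^n = \det(\overline{L} - \lambda I_n) = (\lambda_1 - \lambda)\cdots(\lambda_{n-1}-\lambda)\left(\sum_{i=1}^n u_i - \lambda\right),
\]
and with $\vu = \one_n$ the last factor is $(n-\lambda)$. Dividing the polynomial identity by $(n - \lambda)$ yields $(\lambda_1 - \lambda)\cdots(\lambda_{n-1} - \lambda) = (n-\lambda)^{n-1}$.

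Finally, I would recover the characteristic polynomial of $L(K_n)$ itself by reattaching the factor coming from the zero eigenvalue $\lambda_n = 0$:
\[
\det(L(K_n) - \lambda I_n) = (\lambda_1 - \lambda)\cdots(\lambda_{n-1} - \lambda)(0 - \lambda) = -\lambda(n-\lambda)^{n-1},
\]
as desired. There is no real obstacle here: the only genuine content is recognizing $L(K_n)$ in the form $nI_n - J_n$, after which the rank-one update in Theorem~\ref{thm:main} is tailor-made to collapse the matrix to a scalar and read off all Laplacian eigenvalues (namely $n$ with multiplicity $n-1$ and $0$ with multiplicity $1$) at once.
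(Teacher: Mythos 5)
Your proof is correct and follows exactly the paper's argument: write $L(K_n) = nI_n - \one_n\one_n^T$, note that adding $\one_n\one_n^T$ collapses it to $nI_n$ with characteristic polynomial $(n-\lambda)^n$, and apply Theorem~\ref{thm:main} with $\vu = \one_n$. You simply spell out the final bookkeeping (dividing by the factor $(n-\lambda)$ and reattaching $-\lambda$) that the paper leaves implicit.
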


\begin{proof}
Since $L(K_n) = nI_n - \one_n\one_n^T$, $$\det(L(K_n) + \one_n\one_n^T - \lambda I_n) = \det( nI_n - \lambda I_n) = (n-\lambda)^n.$$  The result follows from Theorem \ref{thm:main} by taking $\vu = \one_n$.
\end{proof}

\subsection{Complete Multipartite Graphs}

For $n_1,\ldots,n_p \in \mathbb{N}$, the \textit{complete multipartite graph} $K_{n_1,\ldots,n_p}$ is the graph whose vertex set is partitioned as $V_1 \cup \cdots \cup V_p$ with $|V_i| = n_i$ for all $i \in [p]$, no edges between any vertices in the same set $V_i$, and all possible edges between vertices in different sets $V_i$ and $V_j$.

\begin{proposition}\label{prop:multipartite}
Let $G$ be the complete multipartite graph $K_{n_1,\ldots,n_p}$ and let $n = n_1 + \cdots + n_p$.  The characteristic polynomial of $L(G)$ is 
$$
\det(L(G) - \lambda I_n) = -\lambda (n - \lambda)^{p-1} \prod_{i=1}^p (n-n_i - \lambda)^{n_i-1}.
$$
\end{proposition}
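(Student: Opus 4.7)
The plan is to apply Theorem~\ref{thm:main} with $\vu = \one_n$, which reduces the problem to computing the characteristic polynomial of $\overline{L} := L(G) + \one_n \one_n^T$ and then dividing out the factor $(n - \lambda)$ contributed by $\sum u_i = n$.

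After reordering the vertices so that $V_1$ comes first, then $V_2$, and so on, I expect $\overline{L}$ to be block diagonal. To verify this, I would examine the entries directly: a vertex in $V_i$ has degree $n - n_i$, so each diagonal entry of the $i$-th block becomes $(n - n_i) + 1$; within $V_i$, non-adjacency in $G$ makes each off-diagonal entry $0 + 1 = 1$; and between distinct parts, adjacency makes the entries $-1 + 1 = 0$. Thus the $i$-th block equals $(n - n_i) I_{n_i} + \one_{n_i}\one_{n_i}^T$, whose eigenvalues can be read off from the spectrum of $\one_{n_i}\one_{n_i}^T$: namely $n$ with multiplicity $1$ (eigenvector $\one_{n_i}$) and $n - n_i$ with multiplicity $n_i - 1$ (on the orthogonal complement). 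Concatenating the $p$ blocks then yields
\[
\det(\overline{L} - \lambda I_n) = (n - \lambda)^p \prod_{i=1}^p (n - n_i - \lambda)^{n_i - 1}.
\]

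To close the argument, I would equate the display above with the expression $(\lambda_1 - \lambda) \cdots (\lambda_{n-1} - \lambda)(n - \lambda)$ given by Theorem~\ref{thm:main}, cancel a single factor of $(n - \lambda)$ as polynomials in $\lambda$, and then multiply by $(\lambda_n - \lambda) = -\lambda$ to recover $\det(L(G) - \lambda I_n)$. The only real care needed is the exponent bookkeeping at this final step: Theorem~\ref{thm:main} contributes exactly one factor of $(n - \lambda)$, which cancels precisely one of the $p$ copies arising from the block decomposition, leaving the $(n - \lambda)^{p-1}$ that appears in the stated formula.
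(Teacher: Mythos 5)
Your proposal is correct and follows essentially the same route as the paper: reorder the vertices by parts so that $\overline{L} = L(G) + \one_n\one_n^T$ becomes block diagonal with blocks $(n-n_i)I_{n_i} + \one_{n_i}\one_{n_i}^T$, compute each block's spectrum (the paper isolates this as a separate lemma, which you instead verify inline), and then invoke Theorem~\ref{thm:main} with $\vu = \one_n$ to trade one factor of $(n-\lambda)$ for $-\lambda$. The exponent bookkeeping you flag is handled correctly.
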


Before we prove Proposition~\ref{prop:multipartite}, let us first establish the following lemma.

\begin{lemma}\label{rank-one-update-identity}
For any $a,b \in \mathbb{R}$ and $n \in \mathbb{N}$, 
\begin{equation} 
\label{eigenvalues-identity-update}
\det\left(aI_n + b\one_n\one_n^T - \lambda I_n\right) = (a-\lambda)^{n-1}\left(a+bn - \lambda\right).
\end{equation}
\end{lemma}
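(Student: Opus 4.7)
The plan is to prove the lemma by reducing it to Theorem~\ref{thm:main}. The right-hand side of Equation~\eqref{eigenvalues-identity-update} strongly suggests such a reduction: the factor $(a-\lambda)^{n-1}$ should play the role of the $n-1$ eigenvalue factors, and $(a+bn-\lambda)$ should play the role of the $\sum u_i - \lambda$ factor, once we absorb the scalar shift $aI_n$ into the spectral parameter.

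Concretely, setting $\mu = \lambda - a$, the identity $aI_n + b\one_n\one_n^T - \lambda I_n = b\one_n\one_n^T - \mu I_n$ reduces the problem to computing $\det(b\one_n\one_n^T - \mu I_n)$. To do this, I would apply Theorem~\ref{thm:main} to the zero matrix, which is the Laplacian of the empty graph on $n$ vertices and hence has all eigenvalues equal to zero. Taking $\vu = b\one_n$ so that $\sum_{i=1}^n u_i = bn$, Theorem~\ref{thm:main} yields $\det(b\one_n\one_n^T - \mu I_n) = (-\mu)^{n-1}(bn - \mu)$. Substituting $\mu = \lambda - a$ and distributing the sign gives $(a-\lambda)^{n-1}(a+bn-\lambda)$, as desired.

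There is no serious obstacle here — the argument is essentially a one-line application of the main theorem after shifting the spectral parameter, with the only care being to track the sign when rewriting $(-\mu)^{n-1}$ as $(a-\lambda)^{n-1}$. An alternative would be to diagonalize $aI_n + b\one_n\one_n^T$ directly by observing that $\one_n$ is an eigenvector with eigenvalue $a+bn$ while every vector in $\one_n^\perp$ is an eigenvector with eigenvalue $a$, but routing through Theorem~\ref{thm:main} makes the lemma a genuine corollary of the main result rather than an independent spectral computation, which fits the stated theme of the paper.
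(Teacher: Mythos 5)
Your argument is correct, but it takes a genuinely different route from the paper. The paper proves Lemma~\ref{rank-one-update-identity} by the direct spectral computation you relegate to your final sentence: $\one_n$ is an eigenvector of $aI_n + b\one_n\one_n^T$ with eigenvalue $a+bn$, every nonzero vector in the orthogonal complement of $\one_n$ is an eigenvector with eigenvalue $a$, and the two multiplicities ($1$ and $n-1$) account for all of $\RR^n$, giving the characteristic polynomial immediately. Your primary route instead observes that the zero matrix is the Laplacian of the empty graph on $n$ vertices, with all eigenvalues zero, and applies Theorem~\ref{thm:main} with $\vu = b\one_n$ to get $\det(b\one_n\one_n^T - \mu I_n) = (-\mu)^{n-1}(bn-\mu)$, then shifts $\mu = \lambda - a$; the sign bookkeeping $(-\mu)^{n-1} = (a-\lambda)^{n-1}$ is handled correctly, and there is no circularity since Theorem~\ref{thm:main} is proved before the lemma is used. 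The trade-off: the paper's proof is self-contained and makes clear that the lemma is just the standard spectrum of a rank-one perturbation of a scalar matrix, independent of any graph-theoretic machinery, whereas your reduction is a pleasant unification that exhibits the lemma as a special case of the main theorem (at the cost of invoking a heavier result, and of the slightly artificial device of the empty graph). Either is acceptable; the paper's choice keeps the applications section readable without forcing the reader to re-parameterize Theorem~\ref{thm:main}, while yours better advertises the theorem's reach.
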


\begin{proof}
First note that $(aI_n + b\one_n\one_n^T)\one_n = (a+bn)\one_n$, so $\one_n$ is an eigenvector of $aI_n + b\one_n\one_n^T$ with corresponding eigenvalue $a+bn$. If $\vv$ is any nonzero vector orthogonal to $\one_n$, then $(aI_n + b\one_n\one_n^T)\vv = a\vv$, so $\vv$ is an eigenvector of $aI_n + b\one_n\one_n^T$ with corresponding eigenvalue $a$.  

Therefore, the orthogonal complement to $\one_n$ is an $(n-1)$-dimensional eigenspace of $aI_n + b\one_n\one_n^T$ corresponding to the eigenvalue $\lambda = a$. So $\lambda = a$ has geometric (and hence algebraic) multiplicity $n-1$, while $\lambda = a+bn$ has multiplicity $1$.
\end{proof}

\begin{proof}[Proof of Proposition~\ref{prop:multipartite}.]
Order the vertices of $G$ so that the first $n_1$ vertices are those in $V_1$, the next $n_2$ vertices are those in $V_2$, and so on. Then $L(G) + \one_n\one_n^T$ is a block diagonal matrix whose diagonal blocks have the form $(n-n_i)I_{n_i} + \one_{n_i}\one_{n_i}^T$ for each $i \in [p]$. Therefore, by Lemma \ref{rank-one-update-identity}, 
\begin{align*}
\det\left((n-n_i)I_{n_i} + \one_{n_i}\one_{n_i}^T - \lambda I_{n_i} \right) &= 
\det\left((n-n_i - \lambda)I_{n_i} + \one_{n_i}\one_{n_i}^T\right) \\
&= (n-n_i - \lambda)^{n_i-1}(n - \lambda),
\end{align*} for each $i \in [p]$. It follows that $$\det(L + \one_n\one_n^T  - \lambda I_n) = (n-\lambda)^p \prod_{i=1}^p (n-n_i - \lambda)^{n_i-1},$$
and hence, by Theorem \ref{thm:main},
$$
\det(L-\lambda I_n) = -\lambda (n-\lambda)^{p-1} \prod_{i=1}^p (n-n_i - \lambda)^{n_i-1}.
$$
\end{proof}

\subsection{Complete Bipartite Graphs with a Perfect Matching Removed}

We turn our attention to graphs of the form $K_{n,n} \setminus M$ where $M$ is a perfect matching on $K_{n,n}$.  Figure~\ref{fig:bipartite-delete-matching} illustrates such a graph for $n=5$.

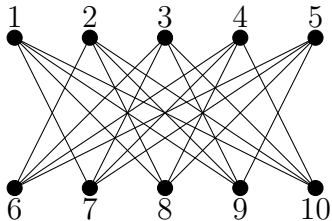
\begin{figure}[H]
    \centering
    \begin{tikzpicture}
    \foreach \x in {1,...,5}{
    \foreach \y in {1,...,5}{
    \ifthenelse{\x = \y}{}{\draw (\x,0) -- (\y,2);};
    }
    }
    \foreach \x in {1,...,5}{
    \draw[fill=black] (\x,2) circle (.1) node[above]{\x};
    }
    \foreach \x in {6,...,10}{
    \draw[fill=black] (\x-5,0) circle (.1) node[below]{\x};
    }
    
    \end{tikzpicture}
    \caption{The graph $K_{5,5} \setminus M$.}
    \label{fig:bipartite-delete-matching}
\end{figure}

\begin{proposition}
Let $G = K_{n,n} \setminus M$ where $M$ is a perfect matching on $K_{n,n}$.  The characteristic polynomial of $L(G)$ is 
$$
\det(L(G) - \lambda I_{2n}) = -\lambda \cdot (n-2 - \lambda)^{n-1} \cdot (n-\lambda)^{n-1} \cdot (2n-2 - \lambda).
$$
\end{proposition}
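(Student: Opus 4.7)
The strategy is to follow the template used for $K_n$ and $K_{n_1,\ldots,n_p}$: apply Theorem~\ref{thm:main} with $\vu = \one_{2n}$, find the characteristic polynomial of $\overline{L} := L(G) + \one_{2n}\one_{2n}^T$ by decomposing $\RR^{2n}$ into invariant subspaces, and then read off the characteristic polynomial of $L(G)$.

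First I would order the vertices so that the removed matching $M$ consists of the edges $\{i,n+i\}$ for $i \in [n]$. Every vertex of $G$ has degree $n-1$, so, writing $J := \one_n\one_n^T$, the Laplacian takes the block form
\[
L(G) \;=\; \begin{pmatrix} (n-1)I_n & I_n - J \\ I_n - J & (n-1)I_n \end{pmatrix},
\]
and, since $\one_{2n}\one_{2n}^T$ has $J$ in each of its four $n \times n$ blocks,
\[
\overline{L} \;=\; \begin{pmatrix} (n-1)I_n + J & I_n \\ I_n & (n-1)I_n + J \end{pmatrix}.
\]

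Next I would diagonalize $\overline{L}$ by splitting $\RR^{2n}$ into the orthogonal invariant subspaces $W_+ = \{(\vx,\vx)^T : \vx \in \RR^n\}$ and $W_- = \{(\vx,-\vx)^T : \vx \in \RR^n\}$. A short calculation shows that $\overline{L}$ acts as $nI_n + J$ on $W_+$ and as $(n-2)I_n + J$ on $W_-$. Two applications of Lemma~\ref{rank-one-update-identity} (with $(a,b) = (n,1)$ and $(a,b) = (n-2,1)$, respectively) then give
\[
\det(\overline{L} - \lambda I_{2n}) \;=\; (n-\lambda)^{n-1}(2n-\lambda)\,(n-2-\lambda)^{n-1}(2n-2-\lambda).
\]

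Finally, since $\sum_{i=1}^{2n} u_i = 2n$, Theorem~\ref{thm:main} equates the left-hand side with $(\lambda_1-\lambda)\cdots(\lambda_{2n-1}-\lambda)(2n-\lambda)$, so dividing through by $(2n-\lambda)$ as polynomials in $\lambda$ and multiplying by $-\lambda$ (to reinstate the factor corresponding to $\lambda_{2n} = 0$) yields the claimed formula for $\det(L(G) - \lambda I_{2n})$. The only genuinely new step beyond the previous two subsections is the block-diagonalization: recognizing that the ``matched'' vertex ordering makes $\overline{L}$ commute with the block-swap involution, which is what reduces the $2n$-dimensional eigenvalue problem to two copies of the matrix $aI_n + J$ already handled by Lemma~\ref{rank-one-update-identity}.
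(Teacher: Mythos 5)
Your proof is correct and follows essentially the same route as the paper: both pass to $\overline{L} = L(G) + \one_{2n}\one_{2n}^T$ under the matched vertex ordering, reduce $\det(\overline{L}-\lambda I_{2n})$ to the product $\det\bigl((n-2-\lambda)I_n + \one_n\one_n^T\bigr)\det\bigl((n-\lambda)I_n + \one_n\one_n^T\bigr)$, and finish with Lemma~\ref{rank-one-update-identity} and Theorem~\ref{thm:main}. The only difference is in how that factorization is justified: the paper uses the commuting-block identity $\det\bigl(\begin{smallmatrix} X & Y \\ Y & X \end{smallmatrix}\bigr) = \det(X-Y)\det(X+Y)$, while you read off the same two factors from the $\pm 1$ eigenspaces $W_{\pm}$ of the block-swap involution --- two equivalent views of the same step.
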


\begin{proof}
Order the vertices of $G = K_{n,n} \setminus M$ so that vertex $i$ is adjacent to vertex $n+i$ for $1 \leq i \leq n$. Consider $\overline{L} = L(G) + \one_{2n}\one_{2n}^T$, which has a block form 
$$
\overline{L} = 
\begin{bmatrix} 
A & I_n \\ 
I_n & A \\
\end{bmatrix},
$$
where $A = (n-1)I_n + \one_n \one_n^T$. Then, 
\begin{eqnarray*}
\det(\overline{L} - \lambda I_{2n} ) &=& 
\det\left(
\begin{bmatrix} 
 A-\lambda I_n & I_n \\ 
I_n & A - \lambda I_n \\
\end{bmatrix}
\right) \\
&\stackrel{(*)}{=}& \det((A - \lambda I_n)^2 - I_n ) \\
&=& \det(A - \lambda I_n - I_n)\det(A - \lambda I_n + I_n) \\
&=& \det((n-2 - \lambda)I_n + \one_n \one_n^T )\det((n-\lambda)I_n + \one_n \one_n^T ) \\
&\stackrel{(**)}{=}& (n-2 - \lambda)^{n-1}( 2n-2 - \lambda)(n - \lambda)^{n-1}(2n - \lambda).
\end{eqnarray*}
Note that (*) follows from the fact that $I_n$ commutes with $A - \lambda I_n$, and (**) follows from Lemma~\ref{rank-one-update-identity}.  The desired result follows from Theorem~\ref{thm:main}.
\end{proof}

\subsection{Threshold graphs} \label{section:threshold}

A nonempty graph is called \textit{threshold} if its vertices can be ordered such that each vertex  is adjacent to none or all of the vertices that come before it.  The former are called \textit{isolated} vertices and the latter are called \textit{dominating} vertices.  The first, or initial, vertex is neither isolated nor dominating.  If $G$ is a threshold graph, we use $I(G)$ and $D(G)$ respectively to denote the sets of isolated and dominating vertices in $G$.

Merris \cite{Merris} showed that the eigenvalues of a threshold graph $G$ on $n$ vertices are given by $$\lambda_i = \#\{v : \deg(v) \geq i\},$$ for $1 \leq i \leq n$. Later, Hammer and Kelmans \cite{Hammer-Kelmans} showed that the multiset of eigenvalues of $G$ can be equivalently expressed as $$\{0\} \cup \{\deg(v) : v \in I(G)\} \cup \{\deg(v) + 1 : v \in D(G)\}.$$ From this perspective, we see that the initial vertex is the only vertex whose degree does not contribute to the spectrum of $L(G)$. We conclude this paper with an elementary proof for the description of the spectrum of the Laplacian matrix of a threshold graph in terms of its vertex degrees.  Specifically, we show that with a convenient choice of rank-one perturbation, there is a matrix $\overline{L}(G)$ for which every vertex degree contributes to the spectrum; and moreover that the degree of the initial vertex can be replaced with the eigenvalue $\lambda = 0$ to obtain the spectrum of $L(G)$.

\begin{proposition}
Let $G$ be a threshold graph on $n$ vertices.  The characteristic polynomial of $L(G)$ is 
$$\det(L(G) - \lambda I_n) = -\lambda \prod_{j \in I(G)} (\deg(j) - \lambda) \prod_{j \in D(G)} (\deg(j)+1 - \lambda).
$$
\end{proposition}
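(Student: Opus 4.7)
The plan is to apply Theorem~\ref{thm:main} with a carefully chosen $\vu$ that makes $\overline{L}(G) = L(G) + \vu\one_n^T$ upper triangular, so that its characteristic polynomial can be read off the diagonal. I would order the vertices $1,\ldots,n$ in threshold order, so that vertex $1$ is the initial vertex and each subsequent vertex is either isolated or dominating with respect to the vertices preceding it. Then I would define $\vu \in \RR^n$ by setting $u_i = 1$ if vertex $i \in D(G)$ and $u_i = 0$ otherwise (in particular, $u_1 = 0$).

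Next I would check upper-triangularity of $\overline{L}(G)$. For $i > j$, the $(i,j)$-entry of $\overline{L}(G)$ equals $L(G)(i,j) + u_i$. If $i$ is dominating then $L(G)(i,j) = -1$ and $u_i = 1$; if $i$ is isolated then $L(G)(i,j) = 0$ and $u_i = 0$. In either case the entry is zero. The diagonal entry in row $i$ is $\deg(i) + u_i$, which equals $\deg(i)$ when $i$ is isolated or the initial vertex, and $\deg(i)+1$ when $i$ is dominating. Hence
\begin{equation*}
\det(\overline{L}(G) - \lambda I_n) = (\deg(1) - \lambda) \prod_{j \in I(G)} (\deg(j) - \lambda) \prod_{j \in D(G)} (\deg(j)+1 - \lambda).
\end{equation*}

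The key observation that closes the proof is that $\sum_i u_i = |D(G)| = \deg(1)$: vertex $1$ is adjacent precisely to the later vertices that are dominating, and every dominating vertex is adjacent to $1$ since $1$ precedes it. Therefore the ``extra'' factor produced by Theorem~\ref{thm:main}, namely $(\sum u_i - \lambda)$, is exactly $(\deg(1) - \lambda)$, matching the spurious diagonal contribution of vertex $1$. Dividing the two expressions for $\det(\overline{L}(G) - \lambda I_n)$ — the triangular one above and the one from Theorem~\ref{thm:main} — yields
\begin{equation*}
(\lambda_1 - \lambda)\cdots(\lambda_{n-1}-\lambda) = \prod_{j \in I(G)} (\deg(j) - \lambda) \prod_{j \in D(G)} (\deg(j)+1 - \lambda),
\end{equation*}
and multiplying by $-\lambda$ gives the claimed formula for $\det(L(G) - \lambda I_n)$.

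The only real obstacle is discovering the correct $\vu$, i.e.\ noticing that the threshold property is precisely what is needed to kill the strict lower triangle of $L(G)$ via a rank-one update whose row multipliers depend on the type of each vertex; once this is in hand, the coincidence $\sum u_i = \deg(1)$ follows from the same structural feature and makes the application of Theorem~\ref{thm:main} completely formal.
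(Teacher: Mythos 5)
Your proof is correct and follows essentially the same route as the paper: order the vertices by the threshold construction sequence, take $\vu$ to be the indicator vector of $D(G)$ so that $L(G) + \vu\one_n^T$ is upper triangular, read off the diagonal, and note that $\sum u_i = |D(G)| = \deg(1)$ so the factor $(\sum u_i - \lambda)$ from Theorem~\ref{thm:main} matches the initial vertex's diagonal contribution. The only difference is that you verify the upper-triangularity entrywise, whereas the paper delegates that structural fact to a citation.
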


\begin{proof}
Order the vertices of $G$ according to their positions in the isolated-dominating construction sequence and let $\vu \in \mathbb{R}^n$ be the indicator vector of $D(G)$.  Go et al. \cite{GKLS} note that for $i<j$ in this prescribed ordering, $\{i,j\}$ is an edge of $G$ if vertex $j$ is dominating and it is not an edge if vertex $j$ is isolated.  Thus,  $\overline{L}(G) = L(G) + \vu \one_n^T$ is upper triangular with diagonal entries $\deg(j) + 1$ for each $j \in D(G)$ and $\deg(j)$ for each $j \in I(G) \cup \{1\}$, which means the characteristic polynomial of $\overline{L}(G)$ is $$\det( \overline{L}(G) - \lambda I_n) = (\deg(1) - \lambda) \prod_{j \in I(G)} (\deg(j) - \lambda) \prod_{j \in D(G)} (\deg(j)+1 - \lambda).$$  The desired result follows from Theorem~\ref{thm:main} by observing that the neighborhood of the initial vertex is precisely the set of dominating vertices in $G$, which implies $\sum_{i=1}^n u_i = \deg(1)$.  
\end{proof}
\section*{Acknowledgments}

We are grateful to Mohamed Omar for helpful and encouraging conversations during the early stages of this project.

\bibliographystyle{plain}

\end{document}